\documentclass[11pt]{amsart}
\usepackage{amssymb,hyperref}
\usepackage{enumerate}

\hypersetup{pdfpagemode=FullScreen,colorlinks=true}

\newtheorem{thm}{Theorem}[section]

\newtheorem{definition}[thm]{Definition}

\newcommand{\bc}{{\mathbb C}}

\newcommand{\bh}{{\mathbb H}}
\newcommand{\bn}{{\mathbb N}}

\newcommand{\ra}{\rightarrow}

\def\cal{\mathcal}

\newcommand{\pslc}{\operatorname{PSL}_2 \bc}
\newcommand{\ie}{i.e.\ }
\newcommand{\Fr}{\operatorname{Fr}}
\def\hyperbolic{\bh}
\begin{document}
\title[Primitive stable]
{Primitive stable closed hyperbolic 3-manifolds}
\author{Inkang Kim, Cyril Lecuire and Ken'ichi Ohshika}
\date{}

\begin{abstract}
We show that closed 3-manifolds with high Heegaard distance and bounded subsurface Heegaard distance are
primitive stable when they are regarded as representations from the free group corresponding to the handlebody.
This implies that any point on the boundary of Schottky space can be approximated by primitive stable representations corresponding to closed hyperbolic 3-manifolds.
\end{abstract}

\footnotetext[1]{I. Kim gratefully acknowledges the partial support
of NRF grant  (2010-0024171).}
\footnotetext[2]{C. Lecuire gratefully acknowledges the support of  Indo-French Research Grant 4301-1.}
\footnotetext[3]{K. Ohshika gratefully acknowledges the support of the JSPS grant-in-aid for scientific research (A) 22244005.}
\maketitle

\section{Introduction}\label{intro}
It is well known that any closed orientable 3-manifold admits a Heegaard decomposition along a splitting
surface $\Sigma$, that is, a decomposition with $M=H^1\cup H^2$, $H^1$ and $H^2$ being homeomorphic handlebodies and $H^1\cap H^2=\partial
H^1=\partial H^2=\Sigma$.
For a Heegaard splitting as above and $j=1,2$, we consider a subset $\cal D_j(\Sigma)\subset \cal
C^1(\Sigma)$ in the curve graph of $\Sigma$ consisting of all essential closed
curves that bound  discs in $H^j$, which we call meridians.
Masur and Minsky \cite{MaM2} proved that these subsets are  quasi-convex in the curve graph.
The Hempel distance or the Heegaard distance of the splitting $M=H^1 \cup H^2$ is defined to
be
$$d(H^1,H^2)=\min\{d(c_1,c_2)|c_i\in \cal D_i(\Sigma)\}.$$

The Hempel distance reflects some properties of 3-manifolds.
Haken \cite{Haken} showed that if $d(H^1, H^2) \geq 1$, then $M$ is irreducible.
Hempel \cite{Hempel} proved that if $M$ contains
an incompressible torus or is a Seifert fibred manifold, then $d(H^1,H^2)\leq 2$ for any splitting
$(H^1,H^2;\Sigma)$.
Combined with the Geometrisation Theorem due to Thurston and Perelman, this implies that if $d(H^1, H^2) \geq 3$, then $M$ is hyperbolic.


Recently Minsky \cite{Mi2} introduced a notion of primitive
stability for $\pslc$ representations of free groups. The main interest of primitive stable representations is that they form an open subset of the character variety which is bigger than the set of convex cocompact representations and on which the group of outer automorphisms acts properly discontinuously.
 Let us briefly recall the definition of primitive stable representations.

In a free group, an element is called \emph{primitive} if it can be a member of a free generating set. A representation $\rho:F\rightarrow \pslc$ of a free group is {\em primitive stable} if it has the following property. Any $\rho$-equivariant map from a Cayley graph of $F$ to $\hyperbolic^3$ maps the geodesics defined by primitive elements to uniform quasi-geodesics.

In \cite{MiM}, Minsky and Moriah constructed lattices which are the images of primitive stable representations and asked whether any lattice is the image of such a primitive stable representation. In this note, we shall give further examples of lattices which are images of primitive stable representations  by giving some sufficient conditions for primitive stability for closed hyperbolic 3-manifolds.
To be more concrete, we shall show that under some conditions, every Heegaard splitting with Hempel distance large enough and with some boundedness condition (boundedness with regard to subsurfaces) give two primitive stable representations of the free groups corresponding  to the two handlebodies constituting the splitting.
As an application of our main result, we shall also show that any boundary point of a Schottky space is a limit of primitive stable representations corresponding to closed hyperbolic 3-manifolds.

\section{Statement of Main Theorem}
Throughout this paper we assume all manifolds to be closed and orientable, and all Heegaard surfaces to have genus at least $2$.

Let $\Sigma$ be a closed surface.
An essential simple closed curve in $\Sigma$ is simply called a curve on $\Sigma$.
A subsurface $Y$ of $\Sigma$ is said to be essential when every frontier component of $Y$ is essential in $\Sigma$.
For a curve $c$ on $\Sigma$ and an essential subsurface $Y$ of $\Sigma$, we define the projection of $c$ to $Y$, which we denote by $\pi_Y(c)$ as follows.
If $c$ does not intersect $Y$ essentially, then we define $\pi_Y(c)$ to be the empty set.
If $c$ intersects $Y$ essentially, then we define $\pi_Y(c)$ to be the set of simple closed curves on $Y$ obtained from the components of $c \cap Y$ by connecting their endpoints by arcs on
the frontier of $ Y$.

\begin{definition}
\label{bounded}
Let $M=H^1 \cup_\Sigma H^2$ be a Heegaard splitting.
Let $\mathcal D_1$ and $\mathcal D_2$ be the set of isotopy classes of meridians in $H^1$ and $H^2$ respectively, regarded as subsets in the curve graph of $\Sigma$.
We call these the meridian complexes for $H^1$ and $H^2$.
Let $Y$ be an essential subsurface of $\Sigma$.
Then the $Y$-Heegaard distance of $H^1 \cup_\Sigma H^2$ is the distance in the curve graph of $Y$ between $\pi_Y(\mathcal D_1)$ and $\pi_Y(\mathcal D_2)$.
\end{definition}

\begin{definition}
We say that a Heegaard splitting $M=H^1 \cup_\Sigma H^2$ has {\em $R$-bounded subsurface distance} when  for any essential subsurface $Y$ of $\Sigma$ the $Y$-Heegaard distance of $H^1 \cup_\Sigma H^2$ is bounded by $R$.
\end{definition}

The main theorem of this note is the following.


\begin{thm}
\label{main}
For any $R$, there exists $K$ depending only on $R$ and the genus $g$ as follows.
For any $3$-manifold admitting a genus-$g$ Heegaard splitting $M=H^1 \cup_\Sigma H^2$ whose Heegaard distance is greater than $K$ and which has $R$-bounded subsurface distance, the manifold $M$ is hyperbolic and the representation $\iota_*: \pi_1(H^j) \rightarrow \pi_1(M) \subset \pslc$ is primitive stable for $j=1,2$.
\end{thm}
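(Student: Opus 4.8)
The plan is to reduce the primitive stability statement to a compactness argument resting on two facts about primitive stability recalled in the introduction, both due to Minsky \cite{Mi2}: convex cocompact (Schottky) representations of a free group are primitive stable, and the set of primitive stable representations is open in the character variety $X(F_g)$ of $F_g=\pi_1(H^j)$. Hyperbolicity is the easy half: choosing $K\geq 3$, the hypothesis $d(H^1,H^2)>K$ already forces $M$ to be hyperbolic by Hempel's estimate combined with the Geometrisation Theorem, and this produces the holonomy whose restrictions $\rho_j:=\iota_*:\pi_1(H^j)\to\pi_1(M)\subset\pslc$ are what we must analyse. By the symmetry exchanging $H^1$ and $H^2$ it suffices to treat $\rho_1$.

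For primitive stability I would argue by contradiction. Fix $R$ and suppose no $K$ works; then there is a sequence of genus-$g$ splittings $M_n=H^1_n\cup_\Sigma H^2_n$ with $R$-bounded subsurface distance and Heegaard distance tending to infinity, for which $\rho_{1,n}:F_g\to\pslc$ fails to be primitive stable. The aim is to extract an algebraic limit $\rho_\infty$ of the $\rho_{1,n}$, after conjugating to a suitable normalisation, and to prove that $\rho_\infty$ is convex cocompact. Granting this, $\rho_\infty$ is primitive stable by Minsky's Schottky example, and openness of primitive stability forces $\rho_{1,n}$ to be primitive stable for all large $n$, contradicting the choice of the sequence. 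Notice that this route bypasses any direct analysis of how individual primitive elements of $F_g$ sit on $\Sigma$, since convex cocompactness and openness are used as black boxes.

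Two geometric inputs feed the extraction, and each consumes one hypothesis. First, the $R$-bounded subsurface distance should be promoted to \emph{bounded geometry}: by the length estimates of Minsky-type model-manifold theory, a short closed geodesic in $M_n$ is detected by a large twisting or a large annular/subsurface projection between the meridian sets, which is exactly a large $Y$-Heegaard distance; bounding every projection between $\pi_Y(\mathcal D_1)$ and $\pi_Y(\mathcal D_2)$ by $R$ therefore yields an injectivity-radius lower bound $\epsilon(R,g)>0$. This provides the compactness needed to normalise and extract the discrete limit $\rho_\infty$, and it guarantees that no curve pinches in the limit, so $\rho_\infty$ has no parabolics. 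Second, the diverging Heegaard distance forces the kernel of $\rho_{1,n}$, normally generated by the images in $F_g$ of the $H^2_n$-meridians, to recede to infinity: any nontrivial relation surviving the limit at bounded size would exhibit an $H^2_n$-meridian lying close to the $\mathcal D_1$-structure, i.e.\ a bounded Heegaard distance. Hence $\rho_\infty$ is faithful as well as discrete, so it uniformizes the interior of the handlebody $H^1$. The remaining point is to exclude a degenerate end, which would be recorded by an ending lamination arising from the meridian data with unbounded subsurface projections; the uniform bound $R$ forbids precisely this, so the limit is geometrically finite, hence convex cocompact, and the contradiction is complete.

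The \textbf{main obstacle} is this identification of the algebraic limit as a convex cocompact handlebody (Schottky) group: it is the one step where both hypotheses are simultaneously essential, and it demands translating the combinatorial bounds on subsurface projections and on Heegaard distance into honest control of the hyperbolic geometry of $M_n$ and of the limiting Kleinian group, namely the absence of short curves, the faithfulness of the limit, and the absence of degenerate ends. Once this is in place, the required dependence of $K$ on $R$ and $g$ alone is automatic, since the genus is fixed throughout and the thickness constant $\epsilon(R,g)$ governing the compactness depends only on $R$ and $g$.
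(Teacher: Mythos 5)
Your reduction to ``limit is convex cocompact, Schottky groups are primitive stable, primitive stability is open'' fails at exactly the step you flag as the main obstacle, and it fails for a structural reason, not a technical one: the algebraic limit $\rho_\infty$ can never be convex cocompact. Schottky space is open in the character variety, and every representation in it is faithful; but each $\rho_{1,n}$ has nontrivial kernel (normally generated by the meridians of $H^2_n$, since $\pi_1(M_n)$ is the fundamental group of a closed hyperbolic $3$-manifold and hence not free). So if $\rho_\infty$ were Schottky, the $\rho_{1,n}$ would eventually be faithful --- absurd. Your final assertion that ``the uniform bound $R$ forbids degenerate ends, so the limit is geometrically finite'' is exactly backwards. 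As the Hempel distance diverges, degeneration is forced: in the paper's proof, Namazi's model manifolds identify the geometry of $M_i$ near $H^j_i$ with that of a convex cocompact handlebody $C^j_i$ whose conformal structure at infinity is $\tau^j_i$, and it is \emph{because of} the $R$-bounded subsurface distance (via Namazi's Corollary 6.4) that the $\tau^j_i$ converge in the Thurston compactification to an \emph{arational lamination in the Masur domain}; the limit of the $C^j_i$ is then a \emph{singly degenerate} handlebody group with that ending lamination. In other words, the role of the hypothesis $R$ is not to prevent a degenerate end but to guarantee that the ending lamination of the inevitable degenerate limit is arational and in the Masur domain, hence disc-busting. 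This is also corroborated by Theorem \ref{density}: the relevant limits lie on the \emph{frontier} of Schottky space, not in its interior.

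Because the limit is geometrically infinite, the two black boxes you permit yourself are insufficient, and the missing ingredient is precisely the criterion of Theorem \ref{JKOL} (Jeon--Kim--Ohshika--Lecuire): a discrete faithful representation uniformizing a singly degenerate handlebody whose ending lamination is disc-busting (in particular, a purely loxodromic one, as arises here) is primitive stable. The paper's argument is: extract, via the models, markings $\phi^j_i:F_g\to\pi_1(H^j_i)\subset\pslc$ converging algebraically to the singly degenerate limit; apply Theorem \ref{JKOL} to conclude the limit is primitive stable; then use openness of primitive stability to conclude for large $i$ --- so openness is used exactly as you intend, but at a boundary point of Schottky space rather than an interior one. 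A secondary weakness of your sketch is that the translation from combinatorial bounds to geometric control (injectivity radius bounds, faithfulness of the limit via ``relations receding to infinity,'' identification of the limit group) is asserted without a mechanism; in the paper all of this is carried by Namazi's theorem that the hyperbolic metric on $M_i$ is close, up to third derivatives, to a model glued from large pieces of convex cores of the two handlebodies, which is what simultaneously yields the bounded geometry, the convergence of markings, and the identification of the limit as the singly degenerate handlebody group.
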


Our proof of this theorem relies on the result of Namazi \cite{Na} on model manifolds for Heegaard splittings with uniformly bounded $Y$-Heegaard distance and on the characterisation of primitive stable discrete and faithful representations given in \cite{JKOL}. As was mentioned before, as long as $K\geq 3$, the manifold $M$ is hyperbolic by the Geometrisation Theorem, but Namazi gave an alternative  proof of the hyperbolicity of $M$ for $K$ large enough as in our statement, which does not use the full Geometrisation Theorem. See \cite{Na}.

\section{Criterion for primitive stability}
Let $F$ be a non-abelian free group.
Fix some symmetric generator system and consider the Cayley graph $C(F)$ with respect to the generator system.
Given a representation  $\rho : F \rightarrow\pslc$ and a base point $o\in
\bh^3$, there is a unique $\rho$-equivariant map
$\tau_{\rho,o}:C(F) \ra \bh^3$ sending the origin $e$ of $C(F)$
to $o$ and taking each edge to a geodesic segment \cite{Fl}.
 A representation $\rho : F \rightarrow \pslc$ is {\it primitive stable} if
there are constants $K,\delta$ such that
$\tau_{\rho,o}$ takes all bi-infinite geodesics in $C(F)$ determined by primitive elements to $(K,\delta)$-quasi-geodesics in $\bh^3$.
This definition is independent of the choice of the base point $o\in \bh^3$, which we can easily verify by changing $K$ and $\delta$.

A measured lamination (or a simple closed curve) $\lambda$  on the boundary of a handlebody is said to be {\it disc-busting} if there exists
$\eta>0$ such that  $i(\partial D,
\lambda)>\eta$ for any compressing disc $D$.
Otherwise $\lambda$ is called {\em disc-dodging}.
In \cite{JKOL}, a complete criterion for primitive stability for faithful discrete representations of the fundamental group of a handlebody $H$ to $\pslc$ was given.
\begin{thm}[Jeon-Kim-Ohshika-Lecuire]
\label{JKOL}
Let $\rho$ be a discrete, faithful and geometrically infinite
representation possibly with parabolics such that the non-cuspidal part $H_0$ of $H=\bh^3/\rho(F)$ is the union of a relative compact core $C_0$ and finitely many ends $E_i$ facing $S_i\subset \partial H$.
Then the representation $\rho$ is primitive stable if and only if every parabolic curve is disc-busting, and every geometrically infinite end $E_i$ has  ending lamination $\lambda_i$ which is disc-busting on $\partial H$.
In particular, if $\rho$ is purely loxodromic, then it is primitive stable.
\end{thm}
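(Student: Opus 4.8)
The plan is to prove the two implications of the equivalence separately, passing to the contrapositive in each case, and to exploit throughout the dictionary between primitive elements of $F=\pi_1(H)$, the meridians on $\partial H$, and the hyperbolic geometry of the cusps and geometrically infinite ends of $H=\bh^3/\rho(F)$. The governing principle is that $\tau_{\rho,o}$ sends a bi-infinite geodesic to a uniform quasi-geodesic exactly when the associated geodesic in $H$ does not backtrack, and that backtracking can only be produced by excursions deep into a Margulis tube around a parabolic or deep down a geometrically infinite end. The convex cocompact base case, where every end is geometrically finite and there are no cusps, is handled by Minsky \cite{Mi2}: there is no place of infinite depth into which a geodesic can backtrack, so all primitive geodesics are uniformly quasi-geodesic.

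\textbf{Necessity.} I would show that if some parabolic curve or some ending lamination is disc-dodging then $\rho$ is not primitive stable, by exhibiting families of primitive elements whose $\tau_{\rho,o}$-images fail to be uniform quasi-geodesics. Suppose first a parabolic curve $c$ is disc-dodging, so $i(\partial D,c)=0$ for some compressing disc $D$ and $c$ may be isotoped off the meridian $m=\partial D$. Cutting $H$ along $D$ produces a free basis of $F$ adapted to $m$ and $c$, relative to which elements of the form $ac^n$ (for a suitable generator $a$) are primitive. Since $c$ is parabolic, the translation length of the loxodromic $ac^n$ grows like $2\log n$ while its word length grows like $n$, so $d_{\bh^3}(o,\rho(ac^n)o)$ cannot be bounded below by a fixed linear function of word length, and the quasi-geodesic constants blow up. If instead an ending lamination $\lambda_i$ is disc-dodging, I would approximate $\lambda_i$ by simple closed curves $c_k$ disjoint from a fixed meridian, promote them to primitive elements $w_k$ by the same cut-along-$D$ construction, and use that curves converging to the ending lamination of a geometrically infinite end are realized by closed geodesics of bounded length escaping down the end; their translation lengths stay bounded while their word lengths diverge, again destroying the quasi-geodesic bound. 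The technical heart of this direction is verifying primitivity of the constructed words, which is precisely where the disjoint compressing disc is indispensable.

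\textbf{Sufficiency.} For the converse I would argue contrapositively: assuming $\rho$ is not primitive stable, I produce a disc-dodging parabolic curve or ending lamination. Failure of primitive stability yields a sequence of primitive elements $w_n$ whose axes map to geodesics with quasi-geodesic constants tending to infinity, hence with longer and longer backtracking. Since $H_0$ is the union of the compact core $C_0$, finitely many ends $E_i$, and finitely many cusps, after passing to a subsequence the backtracking localizes in a single Margulis tube or a single end $E_i$. Recording the directions of these primitive axes and passing to a limit in the space of projective measured laminations on $\partial H$ produces a geodesic lamination $\mu$ that is a limit of primitive elements and that, by the localization, is forced to be either a parabolic curve or the ending lamination $\lambda_i$ of $E_i$. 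One then shows that $\mu$ is disc-dodging: the deep excursion responsible for the backtracking realizes $\mu$ disjointly from a meridian, contradicting the disc-busting hypothesis.

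The main obstacle is this last geometric step of the sufficiency direction: controlling the limits of primitive geodesics and proving that a quasi-geodesic failure forces the limiting lamination to coincide with a parabolic curve or an ending lamination and to be disc-dodging. This requires the structure theory of geometrically infinite ends --- ending laminations together with model-manifold and Cannon--Thurston control of how deeply geodesics penetrate an end --- combined with the compactness of projective measured laminations on $\partial H$. The final assertion, that a purely loxodromic $\rho$ is automatically primitive stable, then follows from the equivalence: there are no parabolic curves, so that condition is vacuous, and the ending lamination of a geometrically infinite, cusp-free end of a handlebody group lies in the Masur domain and is therefore disc-busting.
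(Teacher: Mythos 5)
The paper itself gives no proof of this statement: it is quoted verbatim from the cited work \cite{JKOL}, so your proposal can only be measured against that source. Your necessity direction is essentially the standard (and correct) argument for the parabolic case: a meridian $\partial D$ disjoint from $c$ exhibits $c$ in a free factor, a transvection shows $ac^n$ is primitive, and $d(o,\rho(ac^n)o)\sim 2\log n$ against cyclically reduced length $\sim n$ kills any uniform quasi-geodesic constants. But for the ending-lamination case you have a slip: disc-dodging for a lamination means $\inf_D i(\partial D,\lambda_i)=0$, \emph{not} that some meridian is disjoint from $\lambda_i$, and since intersection numbers with the approximating simple closed curves need not vanish, there is in general no \emph{fixed} meridian disjoint from the curves $c_k$. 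The meridians realizing small intersection vary with $k$, so your cut-along-a-fixed-disc promotion to primitive words does not apply as stated and must be reworked with a sequence of meridians $m_k$ with $i(m_k,\lambda_i)\to 0$, with uniform control on the resulting words.

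The genuine gap is in the sufficiency direction, which carries the whole weight of the theorem and which you largely defer. Your plan --- localize the backtracking, pass to a limit $\mu$ in the space of projective measured laminations, identify $\mu$ with a parabolic curve or an ending lamination, then ``show $\mu$ is disc-dodging'' --- offers no mechanism for the final contradiction, and as stated it cannot succeed: nothing in your outline uses primitivity except the label on the words $w_n$, yet \emph{non-primitive} elements genuinely do have axes diving arbitrarily deep into a degenerate end, so no purely geometric localization-and-limit argument can force the limit to be disc-dodging. The missing ingredient, which is the heart of the proof in \cite{JKOL} (following Minsky \cite{Mi2}), is algebraic: by Whitehead's criterion the Whitehead graph of a primitive element with respect to any free basis has a cut vertex, while a disc-busting parabolic curve or ending lamination has Whitehead graphs without cut vertices; a deep excursion of a primitive axis forces the corresponding curve to contain long segments fellow-traveling $\lambda_i$, transplanting the cut-vertex-free pattern into the primitive word's Whitehead graph --- the contradiction that actually bounds the depth of dives and yields uniform quasi-geodesicity. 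Your phrase ``the deep excursion realizes $\mu$ disjointly from a meridian'' again conflates disc-dodging with disjointness from a single meridian. By contrast, your closing paragraph on the purely loxodromic case is sound: with no parabolics the first condition is vacuous, ending laminations of cusp-free degenerate ends of hyperbolic handlebodies lie in the Masur domain, and Masur domain laminations are disc-busting by compactness of the space of projective measured laminations.
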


\section{Proof of the main theorem}
The proof of the main theorem relies on the work of Namazi \cite{Na} on geometric models associated with Heegaard splittings and on Theorem \ref{JKOL} (more specifically the last sentence of the statement, i.e. \cite[Theorem 1.2]{JKOL}).
We shall prove the contrapositive of our statement.

Fix $R>0$ and consider a sequence $\{M_i\}$ of hyperbolic 3-manifolds with genus $g$ Heegaard splittings $M_i=H^1_i \cup_{\Sigma_i} H^2_i$ having $R$-bounded subsurface distances and Hempel distances $K_i\longrightarrow\infty$. By \cite[Main Theorem 1]{Na}, for sufficiently large $K_0$, if $K_i\geq K_0$, there is a  negatively curved model manifold $N_i$ homeomorphic to $M_i$ whose sectional curvature lies in $(-1-\epsilon, -1+\epsilon)$, where $\epsilon$ depends on $K_0$ and $g$, and goes to $0$ as $K_0 \rightarrow \infty$ fixing $g$, and whose injectivity radii are bounded from below by a uniform positive constant depending only on $K_0$ and $g$. By \cite[Corollary 12.2]{Na}, the metric on $N_i$ is close to the metric on $M_i$ up to the third derivative (and gets closer and closer as $i$ goes to $\infty$).

The model manifold $N_i$ is constructed as follows (see \cite{Na}, p. 173-175). Let $\alpha^1_i$ and $\alpha^2_i$ be pants decompositions of $\partial H^1_i =\partial H^2_i=\Sigma_i$ such that each component of $\alpha^j_i$ bounds a compressing disc in $H^j_i$ and a component of $\alpha^1_i$ and a component of $\alpha^2_i$ realise the Hempel distance of the splitting (i.e. their distance in the curve graph of $\Sigma\approx\Sigma_i$ is equal to the Hempel distance of the splitting $M_i=H^1_i \cup_{\Sigma_i} H^2_i$). Pick a point $\tau^j_i$ in the Teichm\"{u}ller space of $\Sigma$ such that the length of $\alpha^j_i$ with respect to $\tau^j_i$ is less than a fixed constant $B_0$ (for example the Bers constant), and let $m^1_i$ be a convex cocompact hyperbolic structure on $H\approx H^1_i$ whose conformal structure at infinity is $\tau^2_i$, and $m^2_i$  a convex cocompact hyperbolic structure on $H\approx H^2_i$ whose conformal structure at infinity is $\tau^1_i$. Then $N_i$ is obtained
  by pasting a large piece of the convex core of $C^1_i:=(H^1_i, m^1_i)$ with a large piece of the convex core of $C^2_i:=(H^2_i, m^2_i)$ using an $I$-bundle homeomorphic to $\Sigma_i\times I$  (see \cite{Na}, p. 173-175). The metric on this $I$-bundle comes from a deformation of a piece of a doubly degenerate hyperbolic manifold whose description is not relevant to our proof.

It follows from this construction that there are sequences of base points $x^j_i\subset N_i$ and $y^j_i\subset C^j_i$ such that the sequences $\{(N_i,x^j_i)\}$ and $\{(C^j_i,y^j_i)\}$ have the same limit in the pointed Hausdorff-Gromov topology for $j=1,2$. Since the metric on $N_i$ gets closer and closer to the metric on $M_i$, there are sequences of base points $z^j_i\subset M_i$ such that  the sequences $\{(M_i,z^j_i)\}$ and $\{(C^j_i,y^j_i)\}$ have the same limit in the pointed Hausdorff-Gromov topology for $j=1,2$.

Since the Heegaard splittings $M_i=H^1_i \cup H^2_i$ have $R$-bounded subsurface distances, it follows from \cite[Corollary 6.4]{Na} that, passing to a subsequence, $\tau^j_i$ tends in the Thurston compactification to an arational  lamination $\lambda^j$ in the Masur domain on the boundary of $H\approx H^1_i\approx H^2_i$. It follows then from \cite{Oh} or \cite{NS2} that a subsequence of $\{C^j_i\}$ (under the identification of $H$ with $H^j_i$ as above) converges algebraically (and geometrically by the Covering Theorem \cite{Ca2}) to a singly degenerate hyperbolic open handlebody with $|\lambda^j|$ its ending lamination.
Therefore, by pulling back a marking on a compact core of the geometric limit of $\{(M_i, z_i^j)\}$, we have  a marking $\phi^j_i : F_g \rightarrow \pi_1(H^j_i) \subset \pi_1(M_i) \subset \pslc$ which converges algebraically as $i \rightarrow \infty$ after passing to a subsequence.

Let $\phi^j_\infty$ be the limit of $\{\phi^j_i\}$.
As we explained before, $H_\infty^j=\bh^3/\phi_\infty^j(F_g)$ is isometric to the geometric limit of $\{(C^j_i,y^j_i)\}$ which is a singly degenerate hyperbolic open handlebody. By Theorem \ref{JKOL}, $\phi^j_\infty$ is primitive stable for $j=1,2$. Since the primitive stability is an open condition, this implies that $\phi^j_i$ is also primitive stable for sufficiently large $i$. This concludes the proof of our main theorem, for we have proved its contrapositive.

\section{Applications}
In this section, we shall present two applications of our main theorem.
In the first, we shall give  concrete examples of primitive stable representations obtained by Theorem \ref{main}.
In the second, we shall show that every point on the boundary of the Schottky space can be approximated by primitive stable representations corresponding to closed hyperbolic 3-manifolds.

Consider a standard genus $g$ Heegaard splitting  of  $\sharp_g S^1 \times S^2=H^1 \cup H^2$ obtained by doubling.
Let $\eta: \partial H^2 \rightarrow \partial H^1$ denote the identification of the two boundaries in this splitting.
For a mapping class $\phi$ of $S=\partial H^2$, we denote by $H^1 \cup_\phi H^2$  the Heegaard splitting of a 3-manifold obtained by pasting $\partial H^2$ to $\partial H^1$ by $\eta \circ \phi$ instead of $\eta$.

A pseudo-Anosov mapping class on the compressible boundary $S$ of a 3-manifold
$M$ is said to {\em partially extend} if its representative extends to a homeomorphism on
a non-trivial compression body inside $M$, whose exterior boundary is $S$ (see \cite{BJM}).

\begin{thm}\label{pseudo} Set $M_i=H^1 \cup_{\phi^i} H^2$, where $\phi:\partial H^2
\ra \partial H^2$ is a pseudo-Anosov mapping class  no power of which
extends partially  to $H^2$. Then
there is some $N$ such that $M_i$ is primitive stable for every $i \geq N$.
\end{thm}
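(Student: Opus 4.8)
The plan is to derive Theorem \ref{pseudo} as a direct application of Theorem \ref{main}: I would show that the sequence $\{M_i\}$, with $M_i=H^1\cup_{\phi^i}H^2$, satisfies the two hypotheses of that theorem, namely that the Heegaard distances of the splittings tend to infinity while a \emph{single} constant $R$ bounds all of their subsurface distances. Once this is in place, fixing $R$ and letting $K=K(R,g)$ be the constant furnished by Theorem \ref{main}, any $N$ with $K_i>K$ for $i\ge N$ yields the conclusion: each $M_i$ is hyperbolic and $\iota_*:\pi_1(H^j)\to\pi_1(M_i)\subset\pslc$ is primitive stable.

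The first step is to describe the meridian complexes. Since $H^1\cup H^2$ is the double $\sharp_g S^1\times S^2$, under the identification $\eta$ the meridians of $H^1$ and of $H^2$ coincide as one quasi-convex subset $\mathcal D\subset\mathcal C(S)$, where $S=\partial H^2$ and quasi-convexity is by \cite{MaM2}. Regluing by $\eta\circ\phi^i$ then identifies the meridian complex of $H^2$ in $M_i$ with $\phi^i(\mathcal D)$ while leaving that of $H^1$ equal to $\mathcal D$. Hence the Heegaard distance of $M_i$ equals $d(\mathcal D,\phi^i\mathcal D)$, and for a proper essential subsurface $Y\subsetneq S$ its $Y$-Heegaard distance equals $d_Y(\pi_Y(\mathcal D),\pi_Y(\phi^i\mathcal D))$. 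For the growth of the Heegaard distance I would use the hypothesis that no power of $\phi$ extends partially over $H^2$: by \cite{BJM} this is precisely the condition ensuring that the attracting and repelling laminations $\mu^+,\mu^-$ of $\phi$ are disc-busting, hence lie outside the limit set $\Lambda_{\mathcal D}\subset\partial\mathcal C(S)$ of the disc set. Since $\phi$ acts on $\partial\mathcal C(S)$ with north--south dynamics (Masur--Minsky, Klarreich) and $\mu^-\notin\Lambda_{\mathcal D}$, the translates $\phi^i(\Lambda_{\mathcal D})$ converge to the single point $\mu^+\notin\Lambda_{\mathcal D}$, and quasi-convexity of $\mathcal D$ then forces $d(\mathcal D,\phi^i\mathcal D)\to\infty$; so the Hempel distances $K_i$ of $M_i$ go to infinity.

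For the uniform subsurface bound I would exploit that $\phi$ is a \emph{fixed} pseudo-Anosov map, so its mapping torus is a fixed hyperbolic manifold of bounded geometry; equivalently, $\mu^+$ and $\mu^-$ have bounded combinatorics, meaning there is a constant $C=C(\phi)$ with $d_Y(\mu^+,\mu^-)\le C$ for every proper essential $Y\subsetneq S$. Fixing a finite family of meridians in $\mathcal D$ that fills $S$, for each $Y$ some member $c$ meets $Y$, and for large $i$ its image $\phi^i(c)$ also meets $Y$ because $\phi^i(c)\to\mu^+$, which fills. Both $c$ and $\phi^i(c)$ lie on the quasi-geodesic axis of $\phi$ joining $\mu^-$ to $\mu^+$, so coarse monotonicity of subsurface projections along this axis (Behrstock's inequality together with the bounded geodesic image theorem) gives $d_Y(c,\phi^i c)\le d_Y(\mu^-,\mu^+)+O(1)\le C+O(1)$. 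As $c\in\mathcal D$ is a meridian of $H^1$ and $\phi^i(c)\in\phi^i(\mathcal D)$ a meridian of $H^2$, this bounds the $Y$-Heegaard distance of $M_i$ by $R:=C+O(1)$, uniformly in $i$ and in $Y$.

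I expect the crux of the argument to be this interplay between the two hypotheses of Theorem \ref{main}, which pull in opposite directions. The delicate points are, on one side, extracting from the no-partial-extension assumption via \cite{BJM} that $\mu^\pm$ genuinely avoid the disc-set limit set (so that the \emph{global} distance $d(\mathcal D,\phi^i\mathcal D)$ diverges), and on the other side converting the bounded geometry of the fixed mapping torus into one constant $R$ that simultaneously controls \emph{all} proper subsurface projections regardless of $i$. Verifying that these coexist for the same sequence $\{M_i\}$, and that the filling family of meridians makes the coarse monotonicity estimate apply for every $Y$, is where the real work lies; the final invocation of Theorem \ref{main} is then immediate.
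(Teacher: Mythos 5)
Your overall strategy coincides with the paper's: both deduce Theorem \ref{pseudo} from Theorem \ref{main} by checking that the Hempel distances of $M_i=H^1\cup_{\phi^i}H^2$ diverge while the subsurface distances admit a bound $R$ independent of $i$. But the implementations differ, and your version of the second check has a genuine gap. The paper takes tight geodesics $\gamma_i$ in the curve graph of $S$ joining the two meridian complexes, invokes Namazi--Souto \cite{NS2} for the convergence of $\gamma_i$, uniformly on compact sets, to a bi-infinite tight geodesic joining the stable and unstable laminations, and then applies an argument of Masur--Minsky \cite{MaM} to bound the projections of the \emph{endpoints} of $\gamma_i$ to every essential subsurface at once. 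You instead argue through a filling family of meridians and coarse monotonicity along the $\phi$-orbit, and this is where the argument breaks.

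Concretely, the claim ``for large $i$ its image $\phi^i(c)$ also meets $Y$ because $\phi^i(c)\to\mu^+$, which fills'' has the quantifiers reversed. For each \emph{fixed} $Y$ this holds for $i\ge i_0(Y)$, but the hypothesis of Theorem \ref{main} requires, for each single large $i$, a bound on the $Y$-Heegaard distance valid for \emph{all} essential $Y$ simultaneously, and no power of $\phi$ achieves this for one curve: $\phi^i(c)$ is a simple closed curve, so the components of its complement and the annulus around it give essential subsurfaces $Y$ with $\pi_Y(\phi^i(c))=\emptyset$ for every $i$. For such $Y$ you must switch to another meridian $\phi^i(c')$ from the image of the filling family, and then $c$ and $\phi^i(c')$ no longer lie on a common orbit quasi-geodesic, so the coarse-monotonicity estimate you invoke does not apply as stated. (Relatedly, ``both $c$ and $\phi^i(c)$ lie on the quasi-geodesic axis of $\phi$'' is false for a general meridian $c$; what is true is that the orbit $(\phi^n(c))_{n\in\bz}$ is a quasi-geodesic from $\mu^-$ to $\mu^+$, and even then fellow-travelling in the curve graph does not by itself control subsurface projections, since curves at bounded distance there can have arbitrarily large $d_Y$.) The repair is essentially the paper's route: pass to (tight) geodesics between the two disc sets and use the large-link/bounded-geodesic-image machinery of \cite{MaM} together with the convergence statement of \cite{NS2}, which handles all subsurfaces uniformly, including those disjoint from individual vertices. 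A smaller caveat on your first check: the Gromov boundary of the curve graph is not compact and $\Lambda_{\mathcal D}$ need not be, so ``north--south dynamics pushes $\phi^i(\Lambda_{\mathcal D})$ to $\mu^+$'' is not immediate; the divergence $d(\mathcal D,\phi^i\mathcal D)\to\infty$ is more safely obtained from quasi-convexity directly (a thin-quadrilateral argument producing meridians converging to $\mu^+$, contradicting that $\mu^+$ lies outside the limit of the disc set) or from Hempel's original argument in the compact space of projective measured laminations.
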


By \cite{BJM}, no power of  $\phi$ partially extends if and only if the  stable  lamination of $\phi$ lies in
the Masur domain of $\partial H^2$.
It was proved in \cite{BJM} this also implies that the unstable lamination of $\phi$ also lies in the Masur domain of $\partial H_2$.

\begin{proof}
This could be proved just by replacing the arguments from \cite{Na} in the proof of Theorem \ref{main} with arguments from \cite{NS2}.
Instead, we shall prove that  this is a special case of Theorem \ref{main}.
Let $\mathcal D^1_i$ and $\mathcal D^2_i$ be the meridian complexes for the Heegaard splitting $M_i=H^1 \cup_{\phi^i} H^2$.
Let $\gamma_i$ be a tight geodesic in the curve graph of $S$ connecting a point in $\mathcal D^1_i$ and a point in $\mathcal D^2_i$.
Then, as was shown in Namazi-Souto \cite{NS2}, $\gamma_i$ converges, uniformly on any compact set, to a tight geodesic in the curve complex connecting the unstable and stable laminations which are regarded as points at infinity of the curve graph, as $i \rightarrow \infty$.
This implies, by an argument of \cite{MaM}, that there are $R>0$ and $n_0 \in \bn$ such that if $i \geq n_0$, for any essential subsurface $Y$ on $S$,  the projections of the endpoints of $\gamma_i$ to the curve complex of $Y$ are within the distance $R$.
Thus, we see that Theorem \ref{pseudo} is just a special case of Theorem \ref{main}.
\end{proof}

We consider the character variety $\chi(F_g)$ of the representations of the free group $F_g$ of rank $g$ to $\pslc$.
Let $\mathcal S_g$ be the subspace of $\chi(F_g)$ consisting of Schottky representations.

\begin{thm}
\label{density}
Every point in the frontier of $\mathcal S_g$ in $\chi(F_g)$ is a limit of a sequence of primitive stable unfaithful discrete representations $\{\rho_n\}$ such that $\bh^3/\rho_n(F_g)$ is a closed hyperbolic 3-manifold for every $n$.
\end{thm}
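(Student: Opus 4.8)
The plan is to pass through degenerate handlebody groups: I will show that every such group whose ending lamination is an arational lamination in the Masur domain is an algebraic limit of the closed-manifold representations furnished by Theorem~\ref{main}, and then approximate an arbitrary frontier point of $\mathcal S_g$ by these degenerate groups, concluding with a diagonal argument.

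First I would check that the representations produced by Theorem~\ref{main} are of the required type. For a genus-$g$ Heegaard splitting $M=H^1\cup_\Sigma H^2$, attaching the handles of $H^2$ to $H^1$ only imposes relations, so $\iota_*:\pi_1(H^1)=F_g\to\pi_1(M)$ is \emph{surjective} with kernel the normal closure of the meridians of $H^2$. Hence, whenever Theorem~\ref{main} applies and $M$ is closed hyperbolic, the representation $\iota_*:F_g\to\pi_1(M)\subset\pslc$ is discrete, unfaithful, primitive stable, and satisfies $\bh^3/\iota_*(F_g)=M$; this is exactly the kind of representation the statement requires.

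Next I would realize the relevant degenerate groups. Fix a pseudo-Anosov $\psi$ on $\Sigma=\partial H^1=\partial H^2$ no power of which extends partially to $H^2$, so that its stable lamination $\lambda^+$ is arational and lies in the Masur domain, and let $\rho_{\lambda^+}$ be the singly degenerate handlebody structure on $H^1$ with ending lamination $\lambda^+$. Applying Theorem~\ref{pseudo} to $M_i=H^1\cup_{\psi^i}H^2$, the representations $\phi^1_i:F_g\to\pi_1(M_i)\subset\pslc$ are primitive stable for $i$ large. The meridians of $H^2$, seen in $\Sigma$, are $\psi^i$-images of a fixed meridian and therefore converge to $\lambda^+$; hence, by the algebraic convergence established in the proof of Theorem~\ref{main} (Namazi's model together with \cite{Oh,NS2} and the Covering Theorem), $\phi^1_i$ converges to $\rho_{\lambda^+}$. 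Thus $\rho_{\lambda^+}$ is an algebraic limit of primitive stable closed-manifold representations.

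Finally I would treat an arbitrary frontier point $\rho_\infty\in\partial\mathcal S_g$. Being a limit of Schottky groups, $\rho_\infty$ is a degenerate or geometrically finite handlebody group whose end invariant lies in the closure of the Masur domain; I would approximate it by degenerate groups $\rho_{\lambda^+_m}$ of the previous type, choosing pseudo-Anosovs $\psi_m$ whose stable laminations $\lambda^+_m$ converge to the end invariant of $\rho_\infty$, so that $\rho_{\lambda^+_m}\to\rho_\infty$. Diagonalizing over $m$ and $i$ then yields a sequence of primitive stable, unfaithful, discrete representations with closed hyperbolic quotients converging to $\rho_\infty$. The main obstacle is precisely this last step: one must show that the degenerate groups reachable by the construction accumulate on \emph{every} frontier point, including the geometrically finite ones carrying accidental parabolics. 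This requires the density of stable laminations of non-partially-extending pseudo-Anosovs among the ending laminations of degenerate handlebody groups, together with the continuity of the passage from an end invariant to the corresponding point of $\partial\mathcal S_g$; for both I would rely on the structure of the boundary of the deformation space of the handlebody and on the convergence results already used in the proof of Theorem~\ref{main}.
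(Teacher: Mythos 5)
Your first two steps are essentially sound and parallel to the paper: the representations produced by Theorem \ref{main} are indeed discrete, unfaithful, primitive stable with closed hyperbolic quotient, and degenerate handlebody groups whose ending lamination is an arational Masur-domain lamination of the right kind can be realized as algebraic limits of such representations (the paper does this by attaching $2$-handles along pants decompositions $P_j$ converging projectively to the target lamination, rather than via powers of a single pseudo-Anosov gluing as in your Theorem \ref{pseudo} variant, but this difference is inessential; in both cases subsurface boundedness comes from Minsky's bound for rays ending at a stable lamination). The genuine gap is the final step, which you yourself flag as the main obstacle and then resolve only by appeal to ``the continuity of the passage from an end invariant to the corresponding point of $\partial\mathcal S_g$''. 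No such continuity is available: the parametrization of boundary representations by end invariants is notoriously discontinuous, and, worse, the frontier points that carry accidental parabolics --- in particular the geometrically finite ones --- have no arational ending lamination at all, so ``stable laminations $\lambda^+_m$ converging to the end invariant of $\rho_\infty$'' is not even well-posed for them. As stated, your diagonal argument does not close.

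The paper's route around exactly this difficulty has two ingredients you are missing. First, by Corollary 15.1 of \cite{CCHS}, maximal cusps are dense in $\Fr\mathcal S_g$, so it suffices to approximate a maximal cusp $\rho$; this disposes of the ``arbitrary frontier point'' problem by reducing to a single, very special geometrically finite type. Second, to reach a maximal cusp with parabolic curves $c_1,\dots,c_p$ by degenerate groups, one twists a Masur-domain stable lamination $\lambda$ by the $n$-fold Dehn twist $\tau_n$ along $c_1\cup\dots\cup c_p$: the laminations $\tau_n(\lambda)$ are arational, lie in the Masur domain for large $n$ (Lemma 3.4 of \cite{Le}), and inherit double incompressibility because $[\tau_n(\lambda)]\to[c_1\cup\dots\cup c_p]$ and the parabolic locus of a maximal cusp is doubly incompressible. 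The convergence $\psi_n\to\rho$ of the corresponding degenerate boundary representations is then obtained not from any continuity of the end-invariant map but from the compactness theorem of \cite{KLO} (which requires the double incompressibility just established) combined with lower semi-continuity of the length function \cite{brock} to identify the limit as $\rho$. Without a substitute for this convergence mechanism --- some argument forcing the degenerate groups you construct to converge to the prescribed frontier point --- your last step fails, and with it the proof.
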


\begin{proof}
In the following, we regard faithful discrete representations of $F_g$ as hyperbolic structures on the interior of a handlebody $H_g$ of genus $g$.
We let $S$ be the  boundary of $H_g$, and regard ending laminations or parabolic curves as lying on $S$.

By Corollary 15.1 of Canary-Culler-Hersonsky-Shalen \cite{CCHS},  in  $\Fr\mathcal S_g$, the maximal cusps, \ie geometrically finite representations without non-trivial quasi-conformal deformations, are dense.
Therefore we have only to show that any maximal cusp is a limit of a sequence of primitive stable unfaithful discrete representations corresponding to closed manifolds.

Let $\rho: F_g \rightarrow \pslc$ be a maximal cusp, and $M$ a hyperbolic 3-manifold $\bh^3/\rho(F_g)$.
We note that the union of parabolic curves of any maximal cusp is doubly incompressible, \ie has a positive lower bound for the intersection numbers with the meridians of $H_g$.
Let $c_1, \dots , c_p$ be the parabolic curves of $\rho$ regarded as lying on $S$.
Now, let $\lambda$ be the stable lamination contained in the Masur domain of $S$  of a pseudo-Anosov homeomorphism $\phi$ on $S$.
Let $\psi: F_g \rightarrow \pslc$ be a representation on the frontier of $\mathcal S_g$ with ending lamination $|\lambda|$ (the support of $\lambda$).
Now, we consider the composition of  $n$-time iterated Dehn twists on $c_1, \dots, c_p$, and denote it by $\tau_n$.
Consider the measured lamination $\tau_n(\lambda)$ and its projective class $[\tau_n(\lambda)]$.
Since $c_1 \cup \dots \cup c_p$ is doubly incompressible and $[\tau_n(\lambda)]$ converges to $[c_1\cup \dots \cup c_p]$ as $n \rightarrow \infty$, we see that $\tau_n(\lambda)$ is also doubly incompressible.
On the other hand, since $\tau_n(\lambda)$ is arational, it must be contained in the Masur domain for large $n$ as was shown in Lemma 3.4 in Lecuire \cite{Le}.
Let $\psi_n$ be a representation on the frontier of $\mathcal S_g$ with ending lamination $|\tau_n(\lambda)|$ for large $n$.

Fix $n$ and let $P_j\subset S$ be a pants decomposition converging projectively to $\tau_n(\lambda)$.
Now, we construct a $3$-manifold $M_j$ as follows.
We glue a $2$-handle to $\partial H_g$ along an annular neighbourhood (in $S$) of each component of $P_j$.
We get a $3$-manifold whose boundary is a union of spheres.
Then we glue a $3$-ball along each boundary component and denote the resulting 3-manifold by $M_j$.
It is easy to see that $S\subset M_j$ is a Heegaard surface.
We denote by $H_g^1$ the original $H_g$ and by $H_g^2(j)$ the one lying on the opposite side of $S$.
Then as $j \rightarrow \infty$, the meridian complex for $H_g^2(j)$ converges to the point at infinity represented by $\tau_n(\lambda)$ in the Gromov bordification of the curve graph of $S$.

It is known that any geodesic ray with endpoint equal to the support of a stable lamination has subsurface boundedness  (see Minsky \cite{Mi1}).
It follows that  $M_j$ satisfies the assumptions of Theorem \ref{main} for sufficiently large $j$.
Thus $M_j$ is hyperbolic and the representation $\phi_j:F_g\rightarrow \pslc$ induced by the inclusion $H_g\subset M_j$ is primitive stable. Furthermore, it follows easily from the arguments in the proof of Theorem \ref{main} and the Ending Lamination Theorem  that $\psi_n$ is the limit of $\{\phi_j\}$ as $j \rightarrow \infty$.
Hence $\psi_n$ is a limit of a sequence of primitive stable unfaithful discrete representations.

Since $c_1\cup \dots \cup c_p$ is doubly incompressible, the representations $\psi_n$ converge in $\chi(F_g)$  as $n \rightarrow \infty$ by the main theorem of Kim-Lecuire-Ohshika \cite{KLO} and the limit is the maximal cusp $\rho$ by lower semi-continuity of the length function (see \cite{brock}).
Therefore, using a diagonal extraction, we see that $\rho$ is also a limit of primitive stable representations corresponding to closed hyperbolic 3-manifolds.
This completes the proof.
\end{proof}

\noindent     Inkang Kim\\
     School of Mathematics\\
     KIAS\\ Hoegiro 85,
     Seoul, 130-722, Korea\\
     e-mail: inkang\char`\@ kias.re.kr\\

\noindent Cyril Lecuire\\
CNRS\\
Laboratoire Emile Picard\\
Universit\'{e} Paul Sabatier\\
118 route de Narbonne\\
31062 Toulouse Cedex 4\\
e-mail: lecuire\char`\@ math.ups-tlse.fr\\

\noindent
Ken'ichi Ohshika\\
Department of Mathematics\\
Graduate School of Science\\
Osaka University\\
Toyonaka, Osaka 560-0043, Japan\\
email: ohshika\char`\@ math.sci.osaka-u.ac.jp\\


\begin{thebibliography}{99}
\bibitem{BJM} I. Biringer, J. Johnson and Y. Minsky, Extending pseudo-anosov maps to compression bodies, to appear in J. Topology.
\bibitem{brock} J.\ Brock, Continuity of Thurston's length function, Geom. Funct. Anal. 10 (2000), 741--797.
\bibitem{BCM} J. Brock,  R. Canary, and Y. Minsky, The classification of Kleinian surface groups, II: The ending lamination conjecture, Annal. Math. (2) {\bf 176}, (2012),  1--149
\bibitem{Ca} R.\ Canary, Ends of hyperbolic 3-manifolds. J. Amer. Math. Soc. {\bf 6} (1993),  1--35.
\bibitem{Ca2} R.\ Canary, A covering theorem for hyperbolic $3$-manifolds and its applications. Topology {\bf 35} (1996), 751--778.
\bibitem{CCHS} R.\ Canary, M.\ Culler,  S.\ Hersonsky and P.\ Shalen,  Approximation by maximal cusps in boundaries of deformation spaces of Kleinian groups. J. Differential Geom. {\bf 64} (2003),  57--109.
\bibitem{Fl}
W. J. Floyd, {\em Group completions and limit sets of Kleinian groups},
Invent. Math. {\bf 57} (1980), 205--218.
\bibitem{Haken}W. Haken, Some results on surfaces in 3-manifolds,
Studies in Modern Topology, Math. Assoc. Amer. (distributed by
Prentice-Hall, Englewood Cliffs, N.J.), (1968), 39-98.
\bibitem{Hempel}J. Hempel, 3-manifolds as viewed from the curve
complex, Topology, {\bf 40} (2001), 631-657.
\bibitem{JK}W. Jeon and I. Kim, Primitive stable representations of geometrically infinite handlebody hyperbolic 3-manifolds,
Comptes Rendus de l'Academie Sciences, {\bf 348} (2010), 907--910.
\bibitem{JKOL} W.\ Jeon, I.\ Kim and K.\ Ohshika with C.\ Lecuire, Primitive stable representations of free Kleinian groups, to appear in Israel J. Math.
 arXiv 1003.2055v4
\bibitem{KLO} I.\ Kim, C.\ Lecuire and K.\ Ohshika, Convergence of freely decomposable Kleinian groups, arXiv:0708.3266
\bibitem{Le} C.\ Lecuire, An extension of the Masur domain. {\em Spaces of Kleinian groups}, 49--73, London Math. Soc. Lecture Note Ser., {\bf 329}, Cambridge Univ. Press, Cambridge, 2006.
\bibitem{MaM} H.\ Masur and Y.\ Minsky, Geometry of the complex of curves, II: Hierarchical structure.,
Geom. Funct.\ Anal.\ {\bf 10} (2000) 902--974.
\bibitem{MaM2} H.\ Masur and Y.\ Minsky, {\em Quasiconvexity in the curve complex},
In the tradition of Ahlfors and Bers, III, 309--320,
Contemp. Math. {\bf 355}, Amer. Math. Soc., Providence, RI, 2004.
\bibitem{Mi1} Y.\ Minsky, The classification of Kleinian surface groups. I. Models and bounds. Ann. of Math. (2) {\bf 171} (2010), 1--107.
\bibitem{Mi2} Y.\ Minsky, On Dynamics of $Out(F_n)$ on $PSL_2(\mathbb{C})$
characters, Israel J. Math. {\bf 193}, (2013), 47--70.
\bibitem{MiM} Y.\ Minsky and Y.\ Moriah, Discrete primitive-stable representations with large rank surplus. Geom. Topol. {\bf 17} (2013), 2223--2261.
\bibitem{Na} H.\ Namazi, Heeggard splittings and hyperbolic geometry, Thesis, Stony Brook University, 2005
\bibitem{NS1} H.\ Namazi and J.\ Souto, Heegaard splittings and pseudo-Anosov maps. Geom. Funct. Anal. {\bf 19} (2009), 1195--1228.
\bibitem{NS2} H.\ Namazi and J.\ Souto, Non-realizability and ending laminations: proof of the density conjecture. Acta Math. {\bf 209} (2012), no. 2, 323--395.
\bibitem{Oh}
K.\ Ohshika, Realising end invariants by limits of minimally parabolic, geometrically finite groups,  Geom. and Topol. {\bf 15} (2011) 827--890.
\end{thebibliography}
\end{document}